\documentclass[journal]{IEEEtran}
\usepackage{cite}
\usepackage{amsmath,amssymb,amsfonts}
\usepackage{amsthm}
\usepackage{graphicx}
\usepackage{hyperref}
\hypersetup{hidelinks}
\usepackage{textcomp}
\def\BibTeX{{\rm B\kern-.05em{\sc i\kern-.025em b}\kern-.08em
    T\kern-.1667em\lower.7ex\hbox{E}\kern-.125emX}}

\newtheorem{theorem}{Theorem}
\newtheorem{proposition}{Proposition}
\newtheorem{corollary}{Corollary}
\newtheorem{lemma}{Lemma}

\newtheorem{remark}{Remark}
\theoremstyle{definition}
\newtheorem{example}{Example}
\theoremstyle{plain}
\DeclareMathOperator{\spec}{spec}
\newcommand{\R}{\mathbb{R}}
\newcommand{\C}{\mathbb{C}}
\newcommand{\rhoSpec}{\rho}

\begin{document}
\title{Stability of MIMO PID With Backward Differences Under Fast Sampling:
An Exact Spectral Criterion}

\author{Zhaobo~Liu, Yanyi~Wu, and Peng~Zhang
\thanks{This work was supported in part by the National Natural Science Foundation of China under
Grant 12401585, in part by the Guangdong Basic and Applied Basic Research Foundation under Grant
2024A1515011542, and in part by the General Program of Shenzhen Natural Science Foundation under
Grant JCYJ20250604181037012.}%
\thanks{Z. Liu and P. Zhang are with the Institute for Advanced Study, Shenzhen University,
Shenzhen, China. Corresponding author: Zhaobo Liu (e-mail: liuzhaobo@szu.edu.cn).}%
\thanks{Y. Wu is with the School of Science and Engineering, The Chinese University of Hong Kong,
Shenzhen, Shenzhen, China (e-mail: 122090587@link.cuhk.edu.cn).}}

\maketitle
\begin{abstract}
Backward differences are a standard digital realization of derivative action in
proportional-integral-derivative control.  This note proves that, for multivariable state-space
plants, this implementation can be unstable no matter how small the sampling period is.  The exact
lifted model reveals fast eigenvalues created by the stored previous output sample.  Away from
boundary spectra, stability is equivalent to ideal loop stability plus Schur stability of the
product of the output, input, and derivative gain matrices.
\end{abstract}

\begin{IEEEkeywords}
derivative feedback, fast modes, finite differences, multivariable PID control, sampled-data
control, singular perturbation
\end{IEEEkeywords}

\section{Introduction}
\label{sec:introduction}
\IEEEPARstart{C}{ontrol} engineers often use proportional-integral-derivative (PID) controllers
because they are simple, interpretable, and easy to implement
\cite{AstromHagglund1995,AstromHagglund2001,AstromHagglund2006}.  In a digital controller, however,
the derivative term has to be computed from sampled outputs.  The simplest realization is a backward
difference.  This note asks whether a well-posed and exponentially stable ideal continuous-time PID
loop remains stable under that realization when the sampling period is made arbitrarily small.  This
note studies this question for general multi-input multi-output (MIMO) state-space plants.

Sampled-data and digital control systems have a large stability and design literature, including
input delay and Lyapunov--Krasovskii methods, sampled-data redesign, and continuous-time limit
analyses
\cite{AstromWittenmark1997,ChenFrancis1995,FridmanSeuretRichard2004,Fridman2010,LiuFridman2012,
Mirkin2015,VallarellaHaimovich2022}.  These works provide tools for relating sampled closed loops
to continuous-time designs.  For controllers involving output derivatives and sampled PID
implementations, artificial delay and LMI methods give stability guarantees under fast enough
sampling in settings with relative degree at least two or with second-order plants
\cite{FridmanShaikhet2017,SelivanovFridman2018,SelivanovFridman2019}.
Related root effects also occur in other sampled architectures, such as discrete-time disturbance
observers affected by sampling zeros \cite{ParkLeeJooShim2019}.  The present note studies the
standard MIMO PID realization based on backward differences.

We prove that, for this PID realization, the answer can be negative.  Thus the issue is not merely
that the sampling period was chosen too large.  The previous output sample in the difference
quotient is a controller state.  In the exact lifted model, this state creates fast eigenvalues whose
limits are independent of the ideal continuous-time spectrum.

The mechanism can be seen from the plant
\[
    \dot x(t)=Ax(t)+Bu(t), \qquad y(t)=Cx(t),
\]
with the ideal output PID law
\[
    u(t)=-K_Py(t)-K_Iz(t)-K_D\dot y(t), \qquad \dot z(t)=y(t).
\]
Since \(\dot y=C(Ax+Bu)\), the current input may enter the output derivative directly.  In the
sampled controller, \(\dot y(t)\) is replaced by a difference quotient formed from the current and
previous output samples, so the previous output sample becomes an additional state.  The exact
transition matrix over one sampling period then has slow eigenvalues governed by the ideal PID loop
and fast eigenvalues converging to \(-\spec(CBK_D)\).
Here and below, \(\spec(\cdot)\) denotes the spectrum of a square matrix.

The main contribution is an exact spectral characterization of these fast modes for the standard
backward-difference PID realization.
 To the best of our knowledge, this is
the first such characterization for general MIMO state-space plants.  Away from boundary spectra,
the criterion is simple.  The sampled PID loop is stable for all sampling periods below a positive
threshold if and only if the ideal PID loop is stable and \(CBK_D\) is Schur.  The examples show that
the condition is not a condition on individual gain entries, while \(CB=0\) identifies the structural
case in which the fast spectrum vanishes.

Section~\ref{sec:problem} formulates the ideal and sampled closed loops.
Section~\ref{sec:main-results} gives the main results, from the spectral split to the stability
criterion.
Section~\ref{sec:examples} gives counterexamples and boundary cases.
Section~\ref{sec:discussion} discusses cases with higher relative degree.

\section{Problem Formulation}
\label{sec:problem}

Consider the LTI plant
\begin{equation}
    \dot x(t)=Ax(t)+Bu(t),\qquad y(t)=Cx(t),
    \label{eq:plant}
\end{equation}
where \(x(t)\in\R^n\), \(u(t)\in\R^m\), \(y(t)\in\R^p\), and
\[
    A\in\R^{n\times n},\qquad B\in\R^{n\times m},\qquad C\in\R^{p\times n}
\]
are fixed.  The PID gains satisfy
\[
    K_P,K_I,K_D\in\R^{m\times p}.
\]
Throughout, a matrix is called Hurwitz if all its eigenvalues have negative real parts, and Schur
if all its eigenvalues lie in the open unit disk.
We study regulation about the origin.  Reference signals are omitted because stability is determined
by the homogeneous closed loop.  With negative feedback,
\begin{equation}
    u(t)=-K_Py(t)-K_Iz(t)-K_D\dot y(t),\qquad \dot z(t)=y(t),
    \label{eq:ideal-pid}
\end{equation}
where \(z(t)\in\R^p\).

\subsection{The Ideal Continuous-Time PID Closed Loop}

Substituting \eqref{eq:ideal-pid} into \eqref{eq:plant} gives
\[
    \dot x
    =Ax-BK_PCx-BK_Iz-BK_DC\dot x .
\]
Define
\begin{equation}
    E\triangleq I_n+BK_DC.
    \label{eq:E-def}
\end{equation}
The ideal closed loop is well posed as an ODE for arbitrary initial states if \(E\) is nonsingular.
Such nonsingularity conditions are standard when derivative feedback creates an algebraic loop
\cite{BergerIlchmannTrenn2021}.
Equivalently, by Sylvester's determinant identity,
\[
    \det(I_n+BK_DC)=\det(I_p+CBK_D),
\]
so \(E\) is nonsingular if and only if \(I_p+CBK_D\) is nonsingular.

Under \eqref{eq:E-def}, the ideal augmented state
\[
    s(t)\triangleq\begin{bmatrix}x(t)\\ z(t)\end{bmatrix}\in\R^{n+p}
\]
satisfies \(\dot s(t)=A_{\rm id}s(t)\), where
\begin{equation}
    A_{\rm id}\triangleq
    \begin{bmatrix}
    E^{-1}(A-BK_PC) & -E^{-1}BK_I\\
    C & 0
    \end{bmatrix}.
    \label{eq:Aid}
\end{equation}
The ideal PID loop is exponentially stable if and only if \(A_{\rm id}\) is Hurwitz.

\subsection{The Sampled PID Implementation}

Let \(h>0\), \(t_k=kh\), and
\[
    x_k=x(t_k),\qquad y_k=Cx_k.
\]
The digital integral state \(z_k\in\R^p\) is updated by the rectangular rule
\begin{equation}
    z_{k+1}=z_k+hCx_k .
    \label{eq:z-update}
\end{equation}
The derivative term is implemented by
\[
    \frac{y_k-y_{k-1}}{h}.
\]
The previous output sample is a controller memory.  We write
\[
    r_k\triangleq y_{k-1}\in\R^p,\qquad
    \chi_k\triangleq\begin{bmatrix}x_k\\ z_k\\ r_k\end{bmatrix}\in\R^{n+2p}.
\]
At time \(t_k\), the digital controller computes
\begin{equation}
    u_k\triangleq -K_PCx_k-K_Iz_k-K_D\frac{Cx_k-r_k}{h}.
    \label{eq:sampled-pid}
\end{equation}
The plant input is held constant over the sampling interval, so \(u(t)=u_k\) for all
\(t\in[t_k,t_{k+1})\).
The value \(y_{-1}\) only sets the initial memory \(r_0\).  We allow \(r_0\) to be any vector in
\(\R^p\), rather than requiring \(r_0=Cx_{-1}\) for some earlier plant state \(x_{-1}\).

Define
\[
    \Phi_h\triangleq e^{Ah},\qquad
    \Gamma_h\triangleq\int_0^h e^{A\tau}\,d\tau .
\]
Solving the plant over one sampling interval with the constant input \(u_k\) gives
\begin{equation}
    x_{k+1}=\Phi_hx_k+\Gamma_hBu_k .
    \label{eq:sampled-plant}
\end{equation}
The updates for \(x_k,z_k,r_k\) define a linear autonomous discrete-time closed loop in the state
\(\chi_k\).  Stability of the sampled implementation means exponential stability of this
discrete-time system for every initial triple \((x_0,z_0,r_0)\).  Thus the derivative memory is part
of the sampled controller state, not an external disturbance or approximation error.

\paragraph*{Problem statement}
For the plant \eqref{eq:plant} and the gains \(K_P,K_I,K_D\), assume that the ideal PID loop is
well posed and exponentially stable,
\[
    \det E\ne0,\qquad A_{\rm id}\ \text{is Hurwitz}.
\]
Does there exist \(h^\star>0\) such that the sampled implementation \eqref{eq:z-update},
\eqref{eq:sampled-pid}, \eqref{eq:sampled-plant} is exponentially stable for every
\(h\in(0,h^\star)\)?
The goal is to characterize when this property holds in terms of the derivative channel.

\section{Main Results}
\label{sec:main-results}

This section gives the main results.  First, the exact lifted matrix yields a singular spectral
split into fast and slow modes.  The split is then converted into a stability criterion for small
sampling periods.

\subsection{Exact Lifted Model and Limit Spectrum}
\label{sec:lifted}

The sampled state \(\chi_k\) includes the previous output sample \(r_k\), so the controller based on
the backward difference closes as a finite-dimensional linear system.
Combining \eqref{eq:z-update}, \eqref{eq:sampled-pid}, and \eqref{eq:sampled-plant} gives the exact
lifted discrete-time system
\[
    \chi_{k+1}=\mathcal A_h\chi_k ,
\]
where
\begingroup
\small
\setlength{\arraycolsep}{2.5pt}
\begin{equation}
    \mathcal A_h\triangleq
    \begin{bmatrix}
    \Phi_h-\Gamma_hB(K_P+h^{-1}K_D)C & -\Gamma_hBK_I & h^{-1}\Gamma_hBK_D\\
    hC & I_p & 0\\
    C & 0 & 0
    \end{bmatrix}.
    \label{eq:Ah}
\end{equation}
\endgroup
The sampled implementation is exponentially stable if and only if \(\mathcal A_h\) is Schur.  For
fixed \(h\), the states and input over each sampling interval are bounded linear functions of
\(\chi_k\).  The derivation of \eqref{eq:Ah} uses only the plant solution with a constant input over
one sampling interval and the stated controller updates.

The limit of \(\mathcal A_h\) as \(h\to0^+\) must account for the \(1/h\) factor in the derivative
update.  Here \(\Phi_h\to I_n\) and \(\Gamma_h\to0\), but
\[
    h^{-1}\Gamma_h\to I_n .
\]
Therefore,
\[
    \mathcal A_h\to \mathcal A_0\triangleq
    \begin{bmatrix}
    I_n-BK_DC & 0 & BK_D\\
    0 & I_p & 0\\
    C & 0 & 0
    \end{bmatrix}.
\]

Set, for the rest of the paper,
\begin{equation}
    M_D\triangleq CBK_D\in\R^{p\times p}.
    \label{eq:MD-def}
\end{equation}
The spectrum of the limiting matrix can then be computed explicitly.

\begin{proposition}
\label{prop:limit-spectrum}
With \(M_D\) defined in \eqref{eq:MD-def}, the spectrum satisfies
\begin{equation}
    \spec(\mathcal A_0)=\{1\}^{n+p}\cup\bigl(-\spec(M_D)\bigr),
    \label{eq:limit-spectrum}
\end{equation}
with algebraic multiplicities.  Here \(\{1\}^{n+p}\) denotes the eigenvalue \(1\) repeated
\(n+p\) times.
\end{proposition}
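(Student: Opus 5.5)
The plan is to compute the characteristic polynomial of \(\mathcal A_0\) directly and reduce it to a determinant in \(M_D\). First, the \(z\)-block decouples: the second block row and the second block column of \(\mathcal A_0\) vanish except for the diagonal entry \(I_p\). After a symmetric permutation, \(\mathcal A_0\) is block diagonal with blocks \(I_p\) and
\[
    N\triangleq\begin{bmatrix} I_n-BK_DC & BK_D\\ C & 0\end{bmatrix}\in\R^{(n+p)\times(n+p)}.
\]
The \(z\)-block contributes the eigenvalue \(1\) with multiplicity \(p\), and \(\det(\lambda I-\mathcal A_0)=(\lambda-1)^p\det(\lambda I-N)\). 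The claim then reduces to showing that \(\det(\lambda I-N)=(\lambda-1)^n\det(\lambda I_p+M_D)\), an identity of polynomials, which gives the multiplicities.

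To compute \(\det(\lambda I-N)\), I will use a similarity transformation instead of Schur complements, so that no invertibility assumption on \(\lambda\) is needed. Take
\[
    T=\begin{bmatrix} I_n & 0\\ -C & I_p\end{bmatrix},\qquad
    T^{-1}=\begin{bmatrix} I_n & 0\\ C & I_p\end{bmatrix},
\]
which replaces the second block of the state by \(r-Cx\). Then
\[
    N T^{-1}=\begin{bmatrix} I_n-BK_DC+BK_DC & BK_D\\ C & 0\end{bmatrix}
    =\begin{bmatrix} I_n & BK_D\\ C & 0\end{bmatrix}.
\]
Left-multiplying by \(T\) gives
\[
    TNT^{-1}=\begin{bmatrix} I_n & BK_D\\ 0 & -CBK_D\end{bmatrix}.
\]
This matrix is block upper triangular, so \(\spec(N)=\{1\}^n\cup(-\spec(M_D))\) with algebraic multiplicities. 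Combining this with the \(z\)-block yields \eqref{eq:limit-spectrum}.

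The main obstacle is only bookkeeping: choosing the right coordinate change (state \(r-Cx\)) and verifying the product \(TNT^{-1}\) carefully. If that attempt had failed, the fallback would be to expand \(\det(\lambda I-N)\) via the Schur complement with respect to the block \((\lambda-1)I_n+BK_DC\). One would then apply Sylvester's determinant identity, as already used for \(E\), for generic \(\lambda\), and extend the result by polynomial continuity.
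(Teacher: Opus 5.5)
Your argument is correct, and it takes a different route from the paper. After the same permutation to \((x,r,z)\), the paper computes \(\det(\lambda I_{n+p}-M_0)\), where \(M_0\) is your \(N\), as follows:
\begin{enumerate}
\item it takes a Schur complement with respect to the \(\lambda I_p\) block, so it only works for \(\lambda\neq0\);
\item it rewrites the complement as \((\lambda-1)(I_n+\lambda^{-1}BK_DC)\);
\item it applies Sylvester's determinant identity to get \(\det(I_p+\lambda^{-1}M_D)\);
\item it extends to \(\lambda=0\) by polynomial continuation.
\end{enumerate}
This is essentially your fallback, except that it eliminates the \(\lambda I_p\) block rather than \((\lambda-1)I_n+BK_DC\). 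Its advantage is modest: it reuses the determinant identity already used for well-posedness and needs no guessed coordinate change.

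Your similarity \(T\), i.e.\ the coordinates \((x,\,r-Cx)\), gives exactly \(TNT^{-1}=\begin{bmatrix} I_n & BK_D\\ 0 & -M_D\end{bmatrix}\); your products \(NT^{-1}\) and \(TNT^{-1}\) check out. So the factorization holds for every \(\lambda\), with no Sylvester identity and no continuation step.

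Your route also exposes more structure. When \(I_p+M_D\) is nonsingular (equivalently, \(E\) is nonsingular), the off-diagonal block is removed by \(\begin{bmatrix} I_n & X\\ 0 & I_p\end{bmatrix}\) with \(X=-BK_D(I_p+M_D)^{-1}\). This shows at once that the eigenvalue \(1\) of \(\mathcal A_0\) is semisimple, a fact the paper verifies separately with the dual bases \(V,W\). The fast eigenvector \(\begin{bmatrix}-BK_Dq\\ 0\\ q\end{bmatrix}\) used in the boundary-drift proof then follows in one line. It also makes the mechanism visible: the fast modes live in the coordinate \(r_k-y_k=-(y_k-y_{k-1})\), which is the backward difference itself.
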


The term \(-\spec(M_D)\) is the fast part of the limiting spectrum.  It comes from the blocks
\(-BK_DC\) and \(BK_D\) in \(\mathcal A_0\), which remain because the backward difference contains
\(1/h\) and stores the previous output sample \(r_k\).  These eigenvalues need not approach \(1\), so
they are not approximations of continuous-time eigenvalues.

\subsection{Spectral Splitting for Small Sampling Periods}
\label{sec:splitting}

Schur stability is determined by whether all eigenvalues lie in the open unit disk.  Thus the fast
limits in Proposition~\ref{prop:limit-spectrum} are decisive whenever they are away from the unit
circle.  More precisely, if a sampled eigenvalue converges to \(-\nu\) with \(|\nu|<1\), then it is
inside the unit disk for all sufficiently small \(h\).  If \(|\nu|>1\), then it is outside the unit
disk for all sufficiently small \(h\).  Hence no first-order expansion is needed for the fast modes
except in boundary cases with \(|\nu|=1\).

The slow eigenvalues are different.  Proposition~\ref{prop:limit-spectrum} says that \(n+p\)
eigenvalues converge to \(1\), which lies on the boundary of the unit disk.  The limit alone
therefore does not indicate on which side of the unit circle these eigenvalues lie.  The relevant
reference is the ideal PID system \(\dot s=A_{\rm id}s\).  If \(\mu\) is an eigenvalue of
\(A_{\rm id}\), then the state transition over a time interval of length \(h\) has eigenvalue
\[
    e^{h\mu}=1+h\mu+O(h^2).
\]
Thus the first-order displacement of the sampled eigenvalues from \(1\) must be identified.  The
theorem below proves that this displacement is governed exactly by the eigenvalues of
\(A_{\rm id}\).  The well-posedness condition also ensures that the slow cluster at \(1\) is
separate from the fast limiting eigenvalues.

\begin{theorem}
\label{thm:spectral-splitting}
Assume that \(E\) is nonsingular.  Let
\(\mu_1,\dots,\mu_{n+p}\) be the eigenvalues of \(A_{\rm id}\), counted with algebraic multiplicity,
and let \(\nu_1,\dots,\nu_p\) be the eigenvalues of \(M_D\), counted with algebraic multiplicity.
Then, for all sufficiently small \(h>0\), the eigenvalues of \(\mathcal A_h\) can be labeled as
\(\lambda_1(h),\dots,\lambda_{n+2p}(h)\) so that, as \(h\to0^+\),
\begin{equation}
    \lambda_j(h)=1+h\mu_j+o(h),
    \qquad j=1,\dots,n+p,
    \label{eq:slow-asymptotics}
\end{equation}
and
\[
    \lambda_{n+p+\ell}(h)\to -\nu_\ell,
    \qquad \ell=1,\dots,p.
\]
\end{theorem}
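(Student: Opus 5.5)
The plan is to combine continuity of the spectrum with a first-order perturbation argument on the cluster at \(1\).

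\textbf{Fast modes.} By Proposition~\ref{prop:limit-spectrum} and continuity of eigenvalues in the entries of \(\mathcal A_h\) (note \(\mathcal A_h\to\mathcal A_0\), with entries analytic in \(h\) since \(h^{-1}\Gamma_h=I+hA/2+O(h^2)\)), the eigenvalues of \(\mathcal A_h\) split into groups converging to the points of \(\spec(\mathcal A_0)\) with matching multiplicities. Nonsingularity of \(E\) means \(-1\notin\spec(M_D)\), so \(1\notin -\spec(M_D)\). Hence the eigenvalue \(1\) of \(\mathcal A_0\) has algebraic multiplicity exactly \(n+p\) and is isolated from the fast limits. Continuity therefore gives \(n+p\) eigenvalues tending to \(1\) and \(p\) eigenvalues tending to \(-\nu_\ell\), which is the second claim. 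The remaining work concerns the slow cluster.

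\textbf{Slow modes.} Write \(\mathcal A_h=\mathcal A_0+h\mathcal A_1+O(h^2)\), where \(\mathcal A_1\) is read off from the expansions \(\Phi_h=I+hA+O(h^2)\), \(\Gamma_h=hI+h^2A/2+O(h^3)\) and \(h^{-1}\Gamma_h=I+hA/2+O(h^2)\). Let \(P\) be the Riesz spectral projection of \(\mathcal A_0\) onto the generalized eigenspace \(\mathcal V\) at \(1\). I first want to check that \(\mathcal A_0\) acts as the identity on \(\mathcal V\), so that \(1\) is semisimple. I would construct \(\mathcal V\) explicitly. For \((x,z,r)\) with \(r=Cx\), the first row gives \((I-BK_DC)x+BK_DCx=x\), the second row is \(z\), and the third row is \(Cx=r\). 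So \(\mathcal V=\{(x,z,Cx)\}\) is an \((n+p)\)-dimensional subspace of eigenvectors, and \(1\) is semisimple. Standard first-order perturbation theory (Kato) for a semisimple eigenvalue then gives the eigenvalues of the cluster as \(1+h\,\mu_j(P\mathcal A_1|_{\mathcal V})+o(h)\). The claim thus reduces to showing that the compression \(P\mathcal A_1|_{\mathcal V}\) is similar to \(A_{\rm id}\).

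\textbf{Computing the compression (main obstacle).} Here I need the complementary invariant subspace \(\mathcal W\) of \(\mathcal A_0\) (the fast one) so that \(P\) is the projection along \(\mathcal W\). I would parametrize \(\mathcal W\) as the range of the \(\mathcal A_0-I\) image. The range of \(\mathcal A_0-I\) is \(\{(-BK_D C x+BK_D r,\,0,\,Cx-r)\}=\{(BK_Dw,0,-w)\}\) with \(w=r-Cx\in\R^p\). This has dimension \(p\) when \(BK_D\) is injective. In general I would instead use the decomposition \(\chi=(x',z,Cx')+(BK_Dw,0,-w)\) and solve for \(x',w\) from \(x=x'+BK_Dw\) and \(r=Cx'-w\). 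This gives \(w=-(I+M_D)^{-1}(r-Cx)\) and \(x'=x-BK_Dw\), which is well defined precisely because \(I+M_D\) is invertible. (If \(BK_D\) is not injective, the complement is still spanned by such vectors together with the generalized eigenvectors, and the same solving formula defines a projection commuting with \(\mathcal A_0\); I would verify the commutation directly.) Next I apply \(\mathcal A_1\) to \((x,z,Cx)\). In that product the \(K_D\) terms \(-BK_DC x\cdot(\ldots)\) and \(+BK_D Cx\) combine, with the \(A/2\) corrections canceling, to leave a first component \(Ax-BK_PCx-BK_Iz+(\text{terms times }BK_D)\), a second component \(Cx\), and a third component \(0\). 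Projecting with the formula above converts the first component into \((I-BK_D(I+M_D)^{-1}C)(Ax-BK_PCx-BK_Iz)\). By the push-through identity this equals \(E^{-1}(A-BK_PC)x-E^{-1}BK_Iz\). In the coordinates \((x,z)\) on \(\mathcal V\), the compression is therefore exactly \(A_{\rm id}\). The delicate points are getting the \(O(h)\) terms of \(\mathcal A_h\) right, including the \(h^{-1}\Gamma_h\) correction in the \(r\)-column, and confirming that these cancel on \(\mathcal V\). With that done, \eqref{eq:slow-asymptotics} follows.
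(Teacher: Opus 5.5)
Your proposal is correct and follows essentially the paper's proof. The projection you build onto \(\mathcal V=\{(x,z,Cx)\}\) along \(\operatorname{range}(\mathcal A_0-I)=\{(BK_Dw,0,-w)\}\) is exactly \(VW\) for the paper's dual bases \(V,W\), and your compression in \((x,z)\) coordinates is the paper's \(W\mathcal A_1V=A_{\rm id}\), which feeds the same Kato semisimple-cluster lemma. Two small clean-ups. First, that range always has dimension \(p\), because the third component \(-w\) makes \(w\mapsto(BK_Dw,0,-w)\) injective, so the caveat about \(BK_D\) being injective and the parenthetical about extra generalized eigenvectors can be dropped. Second, on \(\mathcal V\) the first component of \(\mathcal A_1\chi\) is exactly \((A-BK_PC)x-BK_Iz\), since the \(-\tfrac12 ABK_DCx\) and \(+\tfrac12 ABK_DCx\) terms cancel, so there are no leftover \(BK_D(\cdot)\) terms to project.
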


\begin{remark}
Theorem~\ref{thm:spectral-splitting} gives the main mechanism.  The \(n+p\) slow eigenvalues behave
like the eigenvalues of the ideal PID flow over one sampling interval.  The remaining \(p\)
eigenvalues are different.  They are created by the stored output sample in the backward difference
and converge to \(-\spec(M_D)\), not to \(1\).  Thus fast sampling improves the slow approximation
but does not move these derivative memory modes toward the ideal PID spectrum.
\end{remark}

\begin{corollary}
\label{cor:spectral-radius}
Assume that \(E\) is nonsingular and that \(\rhoSpec(M_D)<1\).  Let
\[
    \alpha_{\rm id}\triangleq
    \max_{\mu\in\spec(A_{\rm id})}\operatorname{Re}\mu .
\]
Then
\[
    \rhoSpec(\mathcal A_h)=1+h\alpha_{\rm id}+o(h),
    \qquad h\to0^+ .
\]
In particular, if \(A_{\rm id}\) is Hurwitz, then \(\alpha_{\rm id}<0\) and
\(\mathcal A_h\) is Schur for all sufficiently small \(h>0\).
\end{corollary}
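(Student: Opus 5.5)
The corollary follows from Theorem~\ref{thm:spectral-splitting} once we compare the moduli of the slow and fast eigenvalues. Since \(E\) is nonsingular, the theorem applies. We label the eigenvalues of \(\mathcal A_h\) as \(\lambda_1(h),\dots,\lambda_{n+2p}(h)\), with slow eigenvalues \(\lambda_j(h)=1+h\mu_j+o(h)\) for \(j\le n+p\) and fast eigenvalues \(\lambda_{n+p+\ell}(h)\to-\nu_\ell\). The argument has three steps.

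\emph{Step 1: the fast eigenvalues are not dominant.} Set \(\delta\triangleq(1-\rhoSpec(M_D))/2>0\). Since \(\lambda_{n+p+\ell}(h)\to-\nu_\ell\) and \(|\nu_\ell|\le\rhoSpec(M_D)\), we have \(|\lambda_{n+p+\ell}(h)|\le 1-\delta\) for all sufficiently small \(h\).

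\emph{Step 2: moduli of the slow eigenvalues.} Write \(\mu_j=a_j+ib_j\). Then
\[
|\lambda_j(h)|^2=(1+ha_j)^2+h^2b_j^2+o(h)=1+2ha_j+o(h).
\]
Taking square roots gives \(|\lambda_j(h)|=1+ha_j+o(h)\), uniformly in \(j\), since there are finitely many \(j\). Because \(h\mapsto 1+h\alpha_{\rm id}+o(h)\) tends to \(1\), every slow modulus exceeds \(1-\delta\) for small \(h\). Hence
\[
\rhoSpec(\mathcal A_h)=\max_{j\le n+p}|\lambda_j(h)|,
\]
which equals \(\max_j(1+ha_j)+o(h)=1+h\alpha_{\rm id}+o(h)\). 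Here the maximum of finitely many functions, each with an \(o(h)\) remainder, keeps an \(o(h)\) remainder; this is clear after bounding all remainders by a common \(\varepsilon h\).

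\emph{Step 3: the Schur conclusion.} If \(A_{\rm id}\) is Hurwitz, then \(\alpha_{\rm id}<0\). Choosing \(h\) small enough that the \(o(h)\) term is at most \(h|\alpha_{\rm id}|/2\) gives
\[
\rhoSpec(\mathcal A_h)\le 1-h|\alpha_{\rm id}|/2<1,
\]
so \(\mathcal A_h\) is Schur.

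The only delicate point is the bookkeeping in Step 2: the \(o(h)\) remainder must survive both the modulus computation and the maximum. Because there are finitely many eigenvalues and the expansion \(\sqrt{1+x}=1+x/2+O(x^2)\) holds, this is routine.
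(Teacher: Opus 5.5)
Your proof is correct and follows the paper's argument: \(\rhoSpec(M_D)<1\) keeps the fast eigenvalues in a compact subset of the open unit disk, each slow modulus equals \(1+h\operatorname{Re}\mu+o(h)\), and taking the maximum over the finitely many slow eigenvalues gives \(1+h\alpha_{\rm id}+o(h)\). You additionally write out the bookkeeping that the paper leaves implicit: the square-root expansion, the check that the slow moduli eventually dominate the fast ones, and the stability of the \(o(h)\) remainder under the maximum.
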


The proofs of Proposition~\ref{prop:limit-spectrum}, Theorem~\ref{thm:spectral-splitting}, and
Corollary~\ref{cor:spectral-radius} are given in Appendix~\ref{app:spectral-proofs}.

\begin{remark}
Corollary~\ref{cor:spectral-radius} separates stability certification from approximation of the
decay rate.  The assumption \(\rhoSpec(M_D)<1\) keeps the fast eigenvalues in a compact subset of
the open unit disk for all sufficiently small \(h\).  The coefficient of the \(O(h)\) term in
\(\rhoSpec(\mathcal A_h)-1\) is then the spectral abscissa of \(A_{\rm id}\).
\end{remark}

\subsection{Stability and Instability for Small Sampling Periods}
\label{sec:stability}

The spectral split gives two independent sources of stability or instability for small sampling
periods.  The slow eigenvalues enter the unit disk when the ideal PID matrix \(A_{\rm id}\) is
Hurwitz.  The fast eigenvalues enter the unit disk when \(M_D=CBK_D\) is Schur.  If
\(A_{\rm id}\) has an eigenvalue with positive real part, or if \(M_D\) has an eigenvalue outside
the unit disk, the corresponding eigenvalue group leaves the unit disk.  The next theorem records
the sufficiency, the strict instability alternatives, and the equivalence under the two spectral
exclusions in one statement.

\begin{theorem}
\label{thm:stability}
Assume that \(E\) is nonsingular.
The following statements hold.
\begin{enumerate}
\item If \(A_{\rm id}\) is Hurwitz and \(\rhoSpec(M_D)<1\), then there exists \(h^\star>0\) such that
\(\mathcal A_h\) is Schur for every \(h\in(0,h^\star)\).
\item If \(A_{\rm id}\) has an eigenvalue with positive real part, or if \(M_D\) has an eigenvalue
with modulus greater than one, then \(\mathcal A_h\) is not Schur for all sufficiently small
\(h>0\).
\item Suppose, in addition, that \(A_{\rm id}\) has no eigenvalues on the imaginary axis and
\(M_D\) has no eigenvalues on the unit circle.  Then there exists \(h^\star>0\) such that
\(\mathcal A_h\) is Schur for every \(h\in(0,h^\star)\) if and only if \(A_{\rm id}\) is Hurwitz and
\(\rhoSpec(M_D)<1\).
\end{enumerate}
\end{theorem}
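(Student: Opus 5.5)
The plan is to derive all three parts from Theorem~\ref{thm:spectral-splitting}, which we take as given. For sufficiently small \(h>0\) it labels the eigenvalues of \(\mathcal A_h\) as \(n+p\) slow ones \(\lambda_j(h)=1+h\mu_j+o(h)\) and \(p\) fast ones \(\lambda_{n+p+\ell}(h)\to-\nu_\ell\). Since \(\mathcal A_h\) is Schur exactly when every \(|\lambda_i(h)|<1\), it suffices to control the moduli of the two groups separately as \(h\to0^+\).

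\textbf{Part 1.} This follows directly from Corollary~\ref{cor:spectral-radius}. I will nevertheless record the elementary modulus estimate, because parts 2 and 3 reuse it. For a slow eigenvalue,
\[
|\lambda_j(h)|^2=\bigl(1+h\operatorname{Re}\mu_j+o(h)\bigr)^2+\bigl(h\operatorname{Im}\mu_j+o(h)\bigr)^2=1+2h\operatorname{Re}\mu_j+o(h).
\]
Hence \(|\lambda_j(h)|=1+h\operatorname{Re}\mu_j+o(h)\).
\begin{itemize}
\item If \(\operatorname{Re}\mu_j<0\), then \(|\lambda_j(h)|<1\) for all small \(h\).
\item If \(\operatorname{Re}\mu_j>0\), then \(|\lambda_j(h)|>1\) for all small \(h\).
\end{itemize}
For a fast eigenvalue, continuity of the modulus gives \(|\lambda_{n+p+\ell}(h)|\to|\nu_\ell|\).
\begin{itemize}
\item If \(|\nu_\ell|<1\), the eigenvalue eventually lies inside the unit disk.
\item If \(|\nu_\ell|>1\), it eventually lies outside.
\end{itemize}
There are finitely many eigenvalues, so we take the minimum of the finitely many thresholds. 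This gives \(h^\star\) in Part~1.

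\textbf{Part 2.} Suppose some \(\mu_j\) has \(\operatorname{Re}\mu_j>0\), or some \(\nu_\ell\) has \(|\nu_\ell|>1\). The estimate above produces one eigenvalue of \(\mathcal A_h\) of modulus greater than one for all small \(h\). Therefore \(\mathcal A_h\) is not Schur on an interval \((0,h_1)\).

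\textbf{Part 3.} Sufficiency is Part~1. For necessity, assume \(\mathcal A_h\) is Schur on \((0,h^\star)\) and argue by contradiction. If \(A_{\rm id}\) is not Hurwitz, the no-imaginary-axis hypothesis forces an eigenvalue with positive real part. Similarly, if \(\rho(M_D)\ge1\), the no-unit-circle hypothesis forces an eigenvalue of modulus greater than one. In either case Part~2 gives non-Schur matrices for all \(h\in(0,\min(h^\star,h_1))\), which contradicts the assumption.

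The only delicate point is the labeling in Theorem~\ref{thm:spectral-splitting}. It is a statement for each \(h\), and the labels may be permuted as \(h\) varies. The argument above needs only that every eigenvalue of \(\mathcal A_h\) is the \(i\)th labeled one for some \(i\), and that the estimates are uniform over the finitely many labels. Both hold because the \(o(h)\) terms and the convergences are finitely many. No other obstacle arises, since the boundary cases are explicitly excluded in Part~3.
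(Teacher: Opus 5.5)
Your proposal is correct and follows the paper's route. Both derive all three parts from the spectral splitting of Theorem~\ref{thm:spectral-splitting}: slow eigenvalues satisfy $|1+h\mu+o(h)|=1+h\operatorname{Re}\mu+o(h)$, fast eigenvalues converge to $-\nu$, and Part~3 follows by contradiction from Part~2 under the spectral exclusions. Your explicit modulus computation and the minimum over finitely many thresholds just spell out steps the paper leaves implicit.
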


\begin{remark}
Theorem~\ref{thm:stability} separates three issues.  The nonsingularity of \(E\) makes the ideal
derivative feedback law an ODE.  The Hurwitz property of \(A_{\rm id}\) places the slow eigenvalues
inside the unit disk for small \(h\).  The condition \(\rhoSpec(M_D)<1\) places the fast eigenvalues
created by the backward difference inside the unit disk.  The spectral exclusions in the last item
are boundary assumptions: if \(A_{\rm id}\) has an eigenvalue on the imaginary axis, or if \(M_D\)
has an eigenvalue on the unit circle, higher-order terms decide stability.  The case of a simple
fast limit on the unit circle is described in Section~\ref{sec:unit-circle-fast}.
\end{remark}

\begin{corollary}
\label{cor:precertification}
Assume that \(E\) is nonsingular, that \(A_{\rm id}\) is Hurwitz, and that
\[
    \spec(M_D)\cap\{z\in\C:|z|=1\}=\emptyset .
\]
Then the sampled PID implementation based on the backward difference is exponentially stable for every
sufficiently small sampling period if and only if
\[
    \rhoSpec(M_D)<1 .
\]
If \(\rhoSpec(M_D)>1\), no sufficiently small sampling period can recover stability of this
implementation.
\end{corollary}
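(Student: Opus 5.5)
This corollary is a restatement of Theorem~\ref{thm:stability} specialized to the case where \(A_{\rm id}\) is already Hurwitz, so the proof will simply invoke that theorem. The identification of exponential stability of the sampled implementation with Schur stability of \(\mathcal A_h\) is taken from Section~\ref{sec:lifted}. That section shows that the closed loop \eqref{eq:z-update}, \eqref{eq:sampled-pid}, \eqref{eq:sampled-plant} is exactly \(\chi_{k+1}=\mathcal A_h\chi_k\) for arbitrary initial triples \((x_0,z_0,r_0)\).

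The plan is to check the hypotheses of item 3 of Theorem~\ref{thm:stability}. First, \(E\) is nonsingular by assumption. Second, \(A_{\rm id}\) Hurwitz implies that it has no eigenvalues on the imaginary axis. Third, the displayed hypothesis says that \(M_D\) has no eigenvalues on the unit circle. Item 3 then gives the following equivalence: there exists \(h^\star>0\) with \(\mathcal A_h\) Schur for all \(h\in(0,h^\star)\) if and only if \(A_{\rm id}\) is Hurwitz and \(\rhoSpec(M_D)<1\). Since \(A_{\rm id}\) is Hurwitz by assumption, the right-hand side reduces to \(\rhoSpec(M_D)<1\), which is the claimed equivalence. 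For the ``if'' direction alone, item 1 of Theorem~\ref{thm:stability} (or Corollary~\ref{cor:spectral-radius}) already suffices.

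For the final sentence, assume \(\rhoSpec(M_D)>1\). Then \(M_D\) has an eigenvalue \(\nu\) with \(|\nu|>1\), and item 2 of Theorem~\ref{thm:stability} shows that \(\mathcal A_h\) fails to be Schur for all sufficiently small \(h>0\). The mechanism, via Theorem~\ref{thm:spectral-splitting}, is that some eigenvalue \(\lambda(h)\to-\nu\) with \(|-\nu|>1\), so \(|\lambda(h)|>1\) eventually. I will phrase ``no sufficiently small sampling period can recover stability'' precisely: there is an \(h_0>0\) such that the implementation is unstable for every \(h\in(0,h_0)\). This is stronger than merely negating the existence of \(h^\star\).

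There is no real obstacle here; all the analytic content sits in Theorems~\ref{thm:spectral-splitting} and \ref{thm:stability}. The only point needing care is the boundary case \(\rhoSpec(M_D)=1\). The hypothesis \(\spec(M_D)\cap\{|z|=1\}=\emptyset\) rules it out, so the two cases \(\rhoSpec(M_D)<1\) and \(\rhoSpec(M_D)>1\) are exhaustive and the dichotomy in the statement is complete.
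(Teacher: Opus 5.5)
Your proposal is correct and follows the paper's proof: both reduce the corollary to item~3 of Theorem~\ref{thm:stability}, after noting that a Hurwitz \(A_{\rm id}\) has no eigenvalues on the imaginary axis. Your explicit use of item~2 for the final sentence (instability on a whole interval \((0,h_0)\) when \(\rhoSpec(M_D)>1\)) spells out a point that the paper's one-line proof leaves implicit.
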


The proofs of Theorem~\ref{thm:stability} and Corollary~\ref{cor:precertification} are given in
Appendix~\ref{app:stability-proofs}.

\begin{remark}
For a chosen PID design, the test for small sampling periods is to form \(A_{\rm id}\) and
\(M_D=CBK_D\), then check that \(A_{\rm id}\) is Hurwitz and \(M_D\) is Schur.  In the SISO case, with
\(g\triangleq CB\) and \(K_D=k_d\), the sampled check is \(|gk_d|<1\), whereas ideal
well-posedness only requires \(1+gk_d\ne0\).  For one prescribed sampling period, stability can also
be verified directly from the lifted matrix by the standard discrete-time Lyapunov test
\cite{Kailath1980}: there exists \(P_h=P_h^\top\succ0\) such that
\[
    \mathcal A_h^\top P_h\mathcal A_h-P_h\prec0 .
\]
\end{remark}

\begin{remark}
The matrix \(M_D=CBK_D\) appears because the derivative is implemented as
\((y_k-y_{k-1})/h\) and the previous output sample is stored as a controller state.  Other digital
PID realizations have different controller states.  For example, derivative filtering, predictors,
or other discrete differentiators change the lifted matrix and may change the fast spectrum.
Therefore \(\rhoSpec(M_D)<1\) is an exact condition for the realization in
\eqref{eq:sampled-pid}, not a universal stability condition for every digital PID implementation.
\end{remark}

\section{Counterexamples and Boundary Cases}
\label{sec:examples}

This section serves three roles.  The scalar integrator gives the smallest counterexample to ideal
PID emulation.  The MIMO example shows that the obstruction is spectral, not componentwise.  The
last example explains why fast limits on the unit circle are excluded from the stability criterion.

\subsection{A Scalar Counterexample}

\begin{example}
Consider the scalar integrator
\[
    \dot x(t)=u(t),\qquad y(t)=x(t).
\]
Let \(K_P=k_p\), \(K_I=k_i\), and \(K_D=k_d\), where \(k_p>0\), \(k_i>0\), and \(k_d>1\).  Then the
ideal continuous-time PID closed loop is well-posed and exponentially stable, but the sampled PID
implementation based on the backward difference is unstable for all sufficiently small \(h>0\).
\end{example}

\begin{proof}
The ideal PID closed loop is
\[
    (1+k_d)\dot x=-k_px-k_iz,\qquad \dot z=x .
\]
Thus
\[
    A_{\rm id}=
    \begin{bmatrix}
    -k_p/(1+k_d)&-k_i/(1+k_d)\\
    1&0
    \end{bmatrix},
\]
which is Hurwitz whenever \(k_p>0\), \(k_i>0\), and \(k_d>-1\).  The ideal continuous-time PID
closed loop is therefore well-posed and exponentially stable for the stated gains.

The sampled implementation based on the backward difference has lifted matrix
\[
    \mathcal A_h=
    \begin{bmatrix}
    1-hk_p-k_d&-hk_i&k_d\\
    h&1&0\\
    1&0&0
    \end{bmatrix}.
\]
The fast mode condition is
\[
    |k_d|<1.
\]
Because \(k_d>1\), Theorem~\ref{thm:stability} implies that the sampled implementation is unstable
for all sufficiently small \(h\).
\end{proof}

Thus no smaller choice of \(h\) can move the limiting fast eigenvalue into the unit disk.  The
failure is caused by the memory introduced by the backward difference, not by plant uncertainty, high
dimension, or a conservative LMI relaxation.

\subsection{A Coupled MIMO Counterexample}

The next example shows a genuinely coupled MIMO effect.  Each entry of the derivative gain is smaller
than one in magnitude, but the derivative channel still has an eigenvalue outside the unit disk.

\begin{example}
Consider
\[
    \dot x(t)=u(t),\qquad y(t)=x(t),
    \qquad x,u,y\in\R^2,
\]
with
\[
    D_0\triangleq
    \begin{bmatrix}
    0.8 & 0.6\\
    0.6 & 0.8
    \end{bmatrix}.
\]
The entries of \(D_0\) all have magnitude less than one, but its eigenvalues are \(1.4\) and \(0.2\).
Set
\[
    K_D=D_0,\qquad K_P=I_2+D_0,\qquad K_I=I_2+D_0 .
\]
Then the ideal continuous-time PID loop is well-posed and exponentially stable, but the sampled PID
implementation based on the backward difference is unstable for all sufficiently small \(h>0\).
\end{example}

\begin{proof}
Here
\[
    E=I_2+D_0
\]
is nonsingular, while \(M_D=CBK_D=D_0\) is not Schur.  Since \(A=0\), \(B=C=I_2\), the ideal PID
matrix becomes
\[
    A_{\rm id}=
    \begin{bmatrix}
    -I_2 & -I_2\\
    I_2 & 0
    \end{bmatrix}.
\]
Its eigenvalues are the roots of \(s^2+s+1=0\), each with multiplicity two, so the ideal PID loop is
stable.  Since a sampled fast eigenvalue converges to \(-1.4\), Theorem~\ref{thm:stability} implies
instability for all sufficiently small \(h\).  The failure is caused by the spectral radius of the
coupled derivative channel, not by a visibly large scalar derivative entry.
\end{proof}

\subsection{Fast Limits on the Unit Circle}
\label{sec:unit-circle-fast}

We now record the first-order drift that decides a simple fast eigenvalue whose limit lies on the
unit circle.

\begin{proposition}
\label{prop:boundary-drift}
Assume that \(E\) is nonsingular.  Let \(\nu\) be a simple eigenvalue of
\(M_D\), and let \(q,\ell\in\C^p\) be right and left eigenvectors satisfying
\[
    M_Dq=\nu q,\qquad \ell^\ast M_D=\nu \ell^\ast,\qquad \ell^\ast q\ne0 .
\]
Set \(\lambda_0=-\nu\).  Then, for all sufficiently small \(h>0\), \(\mathcal A_h\) has a unique
eigenvalue in a fixed small neighborhood of \(\lambda_0\).  Denote this eigenvalue by
\(\lambda_f(h)\).  It satisfies
\[
    \lambda_f(h)=\lambda_0+h\alpha+O(h^2),
\]
where
\begin{equation}
    \alpha=-\frac{\beta_\nu}{(\nu+1)\ell^\ast q},
    \label{eq:boundary-drift}
\end{equation}
with
\[
    \beta_\nu\triangleq
    \nu\,\ell^\ast CBK_Pq
    +\frac{\nu-1}{2}\,\ell^\ast CABK_Dq .
\]
If \(|\nu|=1\), then \(|\lambda_f(h)|>1\) for all sufficiently small \(h>0\) when
\(\operatorname{Re}(\overline{\lambda_0}\alpha)>0\), and \(|\lambda_f(h)|<1\) for all sufficiently
small \(h>0\) when this quantity is negative.  If this quantity is zero, second-order terms are
needed.
\end{proposition}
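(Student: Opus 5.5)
Write $\mathcal A_h=\mathcal A_0+h\mathcal A_1+O(h^2)$ and apply first-order perturbation theory for a simple eigenvalue of $\mathcal A_0$.

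\textbf{Step 1: the expansion.} From $\Phi_h=I+hA+O(h^2)$ and $h^{-1}\Gamma_h=I+\tfrac h2A+O(h^2)$ we get
\[
\mathcal A_1=\begin{bmatrix} A-BK_PC-\tfrac12ABK_DC & -BK_I & \tfrac12ABK_D\\ C&0&0\\0&0&0\end{bmatrix}.
\]

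\textbf{Step 2: $\lambda_0$ is a simple eigenvalue of $\mathcal A_0$.} Nonsingularity of $E$ gives $\nu\neq-1$, so $\lambda_0\ne1$. By Proposition~\ref{prop:limit-spectrum}, simplicity of $\nu$ in $M_D$ then makes $\lambda_0$ a simple eigenvalue of $\mathcal A_0$. Continuity of eigenvalues gives a unique eigenvalue $\lambda_f(h)$ in a fixed neighborhood. Since $\mathcal A_h$ is analytic in $h$, $\lambda_f$ is analytic, so
\[
\lambda_f=\lambda_0+h\,\frac{w^\ast\mathcal A_1 v}{w^\ast v}+O(h^2).
\]

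\textbf{Step 3: eigenvectors of $\mathcal A_0$.} For the right eigenvector, take $v=(x,0,r)$ with $x=BK_Dq\cdot c$. The equations are $(I-BK_DC)x+BK_Dr=\lambda_0x$ and $Cx=\lambda_0r$. Try $r=q$ and $x=\gamma BK_Dq$. Then $Cx=\gamma\nu q=-\nu q$ forces $\gamma=-1$. Check the first row: $-BK_Dq+\nu BK_Dq+BK_Dq=\nu BK_Dq$, which equals $\lambda_0x=\nu BK_Dq$. So $v=(-BK_Dq,\,0,\,q)$.

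For the left eigenvector, take $w=(a,0,b)$ with $a^\ast=\ell^\ast C$. The equations are $a^\ast(I-BK_DC)+b^\ast C=\lambda_0a^\ast$ and $a^\ast BK_D=\lambda_0 b^\ast$. The second gives $b^\ast=-\ell^\ast$ (using $\nu\ell^\ast=\ell^\ast M_D$). The first becomes $\ell^\ast C-\nu\ell^\ast C-\ell^\ast C=-\nu\ell^\ast C$, which holds.

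Then
\[
w^\ast v=-\ell^\ast CBK_Dq-\ell^\ast q=-(\nu+1)\ell^\ast q .
\]

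\textbf{Step 4: the numerator.} Compute $w^\ast\mathcal A_1v$. The middle rows contribute nothing because $w$ has zero $z$-component and $v$ has zero $z$-component; the $C$ row enters only through the $z$ entry of $w$, which is zero. The first block row gives
\[
\ell^\ast C\Bigl[(A-BK_PC-\tfrac12ABK_DC)(-BK_Dq)+\tfrac12ABK_Dq\Bigr].
\]
Expanding term by term with $CBK_Dq=\nu q$:
\begin{itemize}
\item $-\ell^\ast CABK_Dq$ from the $A$ term;
\item $+\nu\,\ell^\ast CBK_Pq$ from the $-BK_PC$ term;
\item $+\tfrac{\nu}{2}\,\ell^\ast CABK_Dq$ from the $-\tfrac12ABK_DC$ term;
\item $+\tfrac12\,\ell^\ast CABK_Dq$ from the $\tfrac12ABK_D$ term.
\end{itemize}
These sum to $\nu\ell^\ast CBK_Pq+\tfrac{\nu-1}{2}\ell^\ast CABK_Dq=\beta_\nu$. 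Hence $\alpha=-\beta_\nu/((\nu+1)\ell^\ast q)$.

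\textbf{Step 5: boundary dichotomy.} Take $|\lambda_0|=1$. Then
\[
|\lambda_f|^2=1+2h\operatorname{Re}(\overline{\lambda_0}\alpha)+O(h^2).
\]
The sign of $\operatorname{Re}(\overline{\lambda_0}\alpha)$ therefore decides whether $|\lambda_f|>1$ or $|\lambda_f|<1$ for small $h$; if it vanishes, the $O(h^2)$ terms decide.

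\textbf{Main obstacle.} The delicate part is the bookkeeping in Step 1: getting the second-order expansion of $h^{-1}\Gamma_h$ right, since it multiplies the $h^{-1}K_D$ terms and is what produces the coefficient $\tfrac{\nu-1}{2}$.
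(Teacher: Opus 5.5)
Your proposal is correct and follows the paper's proof essentially step for step. It uses the same expansion $\mathcal A_1$, the same eigenvectors $v=(-BK_Dq,0,q)$ and $w^\ast=(\ell^\ast C,0,-\ell^\ast)$, the simple-eigenvalue formula $\alpha=w^\ast\mathcal A_1v/(w^\ast v)$ with $w^\ast v=-(\nu+1)\ell^\ast q$, and the same expansion of $|\lambda_f(h)|^2$ for the boundary case. One cosmetic point: in Step 3 you obtain $b^\ast=-\ell^\ast$ by dividing by $\lambda_0$, which is not allowed when $\nu=0$. Nothing is lost, because $w^\ast=(\ell^\ast C,0,-\ell^\ast)$ satisfies both left-eigenvector equations by direct verification.
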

The proof is given in Appendix~\ref{app:boundary-drift-proof}.

For the scalar integrator, the same drift can be read directly from the characteristic polynomial.
For the scalar lifted matrix,
\begin{equation}
    p_h(\lambda)
    =
    \lambda(\lambda-1)(\lambda-1+hk_p+k_d)
    +h^2k_i\lambda
    -k_d(\lambda-1).
    \label{eq:scalar-char-poly}
\end{equation}
Set \(k_d=1\).  Since \(p_0(\lambda)=(\lambda-1)^2(\lambda+1)\), the fast root \(-1\) is simple and
admits the expansion
\[
    \lambda_f(h)=-1+h\alpha+O(h^2),
\]
Substitution into \eqref{eq:scalar-char-poly} gives
\[
    p_h(\lambda_f(h))
    =
    h(4\alpha+2k_p)+O(h^2).
\]
Hence
\[
    \lambda_f(h)=-1-\frac{k_p}{2}h+O(h^2).
\]
For \(k_p,k_i>0\), this fast root lies outside the unit disk for all small \(h>0\).  This
does not mean that every fast limit on the unit circle is unstable.  Both outcomes can occur.  For
\(\dot x=-2x+u\), \(y=x\), \(k_d=1\), \(k_p=-1\), and \(k_i>0\) small, the ideal matrix
\(A_{\rm id}\) is Hurwitz and Proposition~\ref{prop:boundary-drift} gives
\(\lambda_f(h)=-1+\frac12 h+O(h^2)\), so the fast root initially moves inside.  Thus a fast
limit on the unit circle cannot be classified from the leading spectral split alone.

\section{Relation to Relative-Degree PID Results}
\label{sec:discussion}

This section explains why the fast modes identified in this note do not appear in many sampled PID
results formulated in relative-degree coordinates.  Such analyses often organize the plant around
output derivatives \cite{FridmanShaikhet2017,SelivanovFridman2019}.  When the output has relative
degree at least two, the input does not enter the first output derivative, and hence
\[
    CB=0.
\]

\begin{proposition}
\label{prop:cb-zero}
If \(CB=0\) and \(A_{\rm id}\) is Hurwitz, then there exists \(h^\star>0\) such that the sampled PID
implementation based on the backward difference is exponentially stable for every \(h\in(0,h^\star)\).
\end{proposition}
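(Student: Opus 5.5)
This is a direct specialization of Theorem~\ref{thm:stability}, item~1. The plan is to check that the hypotheses of that item hold automatically when \(CB=0\), and then invoke it.

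\textbf{Well-posedness.} By Sylvester's determinant identity,
\[
    \det E=\det(I_n+BK_DC)=\det(I_p+CBK_D)=\det I_p=1,
\]
so \(E\) is nonsingular. This is the standing assumption of Theorem~\ref{thm:spectral-splitting} and Theorem~\ref{thm:stability}.

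\textbf{Fast spectrum.} Since \(M_D=CBK_D=0\), we have \(\spec(M_D)=\{0\}\) and \(\rhoSpec(M_D)=0<1\). By Proposition~\ref{prop:limit-spectrum}, the \(p\) fast eigenvalues of \(\mathcal A_0\) all sit at the origin. Theorem~\ref{thm:spectral-splitting} then gives \(\lambda_{n+p+\ell}(h)\to0\) as \(h\to0^+\), so these eigenvalues lie well inside the unit disk for small \(h\).

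\textbf{Slow spectrum and conclusion.} Together with the hypothesis that \(A_{\rm id}\) is Hurwitz, both assumptions of Theorem~\ref{thm:stability}, item~1, are satisfied. That item yields \(h^\star>0\) such that \(\mathcal A_h\) is Schur for all \(h\in(0,h^\star)\). This is exactly exponential stability of the sampled implementation. Equivalently, Corollary~\ref{cor:spectral-radius} gives \(\rhoSpec(\mathcal A_h)=1+h\alpha_{\rm id}+o(h)<1\).

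\textbf{Interpretation.} It is worth noting why \(CB=0\) also simplifies the ideal loop: \(E^{-1}=I_n-BK_DC\), so \(E^{-1}(A-BK_PC)\) and \(E^{-1}BK_I=BK_I\) (using \(CB=0\)) take explicit forms. This is not needed for the argument, but it explains the connection to the relative-degree coordinates of \cite{FridmanShaikhet2017,SelivanovFridman2019}.

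There is no genuine obstacle here; the only point needing care is confirming that \(\det E\ne0\) holds automatically, so the earlier theorems apply without extra hypotheses.
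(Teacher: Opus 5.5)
Your proposal is correct and matches the paper's proof: both observe that \(CB=0\) gives \(M_D=0\), hence \(\rhoSpec(M_D)=0<1\), and \(\det E=\det(I_p+CBK_D)=1\) by Sylvester's identity, so item~1 of Theorem~\ref{thm:stability} applies directly. Your side remark that \(E^{-1}=I_n-BK_DC\) when \(CB=0\) is also correct, though, as you say, it is not needed for the argument.
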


\begin{proof}
If \(CB=0\), then \(M_D=CBK_D=0\).  Moreover,
\[
    \det(I_n+BK_DC)=\det(I_p+CBK_D)=1,
\]
so the ideal derivative feedback law is well posed.  The result follows from
Theorem~\ref{thm:stability}.
\end{proof}

The proposition shows that the obstruction in this note is absent when the relative degree is at
least two.  The derivative memory still appears in the lifted state, but its limiting fast eigenvalues
are zero.  Thus the additional condition \(\rhoSpec(M_D)<1\) holds automatically.  The difference from
the general state-space case is structural.  When \(CB\ne0\), the current input enters the first output
derivative, and the backward difference leaves a discrete mode that need not approach the slow
continuous-time spectrum.

This distinction also explains the difference between the scalar integrator example and relative
degree two examples.  For the integrator, the output derivative contains the current input, so the
backward difference creates a nonzero fast limit.  For relative degree two, the same calculation gives
\(M_D=0\).  The remaining sampled-data questions then concern delay, robustness, or estimates for
fixed sampling periods, not the fast limiting obstruction studied here.

\begin{remark}
The condition \(\rhoSpec(M_D)<1\) is specific to the realization based on the backward difference
\((y_k-y_{k-1})/h\).  A derivative filter adds controller states and changes the lifted matrix.  The
same method based on lifted states can be applied to filtered derivatives, but the fast matrix must
then be derived from the augmented realization.
\end{remark}

\section{Conclusion}

This note showed that faster sampling alone does not certify stability of the standard sampled PID
implementation based on backward differences.  For general MIMO LTI state-space plants, the exact
lifted model has two groups of eigenvalues.  The slow eigenvalues are governed by the ideal
continuous-time PID loop, while the derivative memory creates fast eigenvalues that converge to
\(-\spec(CBK_D)\) and do not disappear as \(h\) tends to zero.

The resulting criterion for small sampling periods is exact away from boundary spectra.  The sampled
implementation is stable for all sufficiently small sampling periods if and only if the ideal
closed-loop matrix is Hurwitz and \(CBK_D\) is Schur.  Thus a sampled PID design must certify a
property of the implementation itself, not only the stability of the ideal continuous-time law.

This obstruction is absent in relative degree cases with \(CB=0\), which explains why it does not
appear in many formulations based on output derivatives.  In general state-space realizations,
however, the derivative memory is a genuine discrete mode.  Decreasing the sampling period improves
the slow approximation, but it cannot stabilize an unstable fast limit.

\appendices
\section{Proofs of Spectral Results}
\label{app:spectral-proofs}

\begin{lemma}
\label{lem:semisimple-cluster}
Let \(L(h)=L_0+hL_1+O(h^2)\) be a matrix function analytic at \(h=0\).  Suppose that \(\lambda_0\)
is a semisimple eigenvalue of \(L_0\) with algebraic multiplicity \(q\).  Let \(V\) have full column
rank and let \(W\) have full row rank such that
\[
    L_0V=\lambda_0V,\qquad WL_0=\lambda_0W,\qquad WV=I_q .
\]
If \(\eta_1,\dots,\eta_q\) are the eigenvalues of \(WL_1V\), counted with algebraic multiplicity,
then the \(q\) eigenvalues of \(L(h)\) that converge to \(\lambda_0\) can be labeled so that
\[
    \lambda_j(h)=\lambda_0+h\eta_j+o(h),
    \qquad j=1,\dots,q .
\]
\end{lemma}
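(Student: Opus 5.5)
Our plan is to reduce the problem to a fixed $q\times q$ matrix family using the Riesz spectral projection, and then rescale. Fix a small circle $\Gamma$ around $\lambda_0$ that contains no other eigenvalue of $L_0$. By continuity of eigenvalues, for all small $h$ exactly $q$ eigenvalues of $L(h)$ lie inside $\Gamma$, counted with multiplicity. Define
\[
    P(h)=\frac{1}{2\pi i}\oint_\Gamma (\zeta I-L(h))^{-1}\,d\zeta .
\]
This projection is analytic in $h$. Since $\lambda_0$ is semisimple, $P(0)=VW$. Indeed, semisimplicity gives the decomposition into $\ker(L_0-\lambda_0 I)$ and the range of $L_0-\lambda_0I$. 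The columns of $V$ span the kernel, and the rows of $W$ annihilate the complementary invariant subspace: $W(L_0-\lambda_0I)=0$, so $W$ kills $\operatorname{range}(L_0-\lambda_0I)$.

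Next we build analytic bases of the invariant subspace. Set $V(h)=P(h)V$ and $W(h)=WP(h)$. Then $W(h)V(h)=I_q+O(h)$ is invertible for small $h$. Put $\widetilde V(h)=V(h)\bigl(W(h)V(h)\bigr)^{-1}$. Then $\widetilde V$ spans $\operatorname{range}P(h)$, $W(h)\widetilde V(h)=I_q$, and the eigenvalues of $L(h)$ inside $\Gamma$ are exactly those of the $q\times q$ matrix
\[
    R(h)=W(h)L(h)\widetilde V(h).
\]
This holds because $L(h)$ commutes with $P(h)$, so $L(h)\widetilde V=\widetilde V R$.

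We then expand $R(h)$ to first order. Write $R(h)=\lambda_0I_q+hR_1+O(h^2)$. Using $P(h)=VW+hP_1+O(h^2)$, the first-order term is
\[
    R_1 = WL_1V + W(L_0-\lambda_0 I)(\cdots) + (\cdots)(L_0-\lambda_0I)V .
\]
The last two terms vanish because $WL_0=\lambda_0W$ and $L_0V=\lambda_0V$. The normalization correction $(W(h)V(h))^{-1}$ contributes only multiples of $\lambda_0 I$, and these cancel against the $\lambda_0$ part coming from $W(h)V(h)$. Carrying this out carefully, by writing $R(h)-\lambda_0 I=W(h)(L(h)-\lambda_0 I)V(h)\,(W(h)V(h))^{-1}$, gives $R_1=WL_1V$. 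This is the step we expect to require the most care. The point of the factored form is that $(L(h)-\lambda_0I)V(h)=O(h)$ and $W(h)(L(h)-\lambda_0I)=O(h)$, so only the term $WL_1V$ survives at order $h$.

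Finally, we rescale: $(R(h)-\lambda_0 I)/h=WL_1V+O(h)$. Eigenvalues depend continuously on the matrix entries, counted with multiplicity. Hence the eigenvalues of the rescaled matrix can be labeled as $\eta_j+o(1)$. Multiplying by $h$ and adding $\lambda_0$ gives $\lambda_j(h)=\lambda_0+h\eta_j+o(h)$, which is the claim.
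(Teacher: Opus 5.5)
Your proof is correct and follows essentially the same route as the paper: the paper cites the first step of Kato's reduction process, and you write that step out in coordinates, namely the Riesz projection $P(h)$ with $P(0)=VW$, the reduction of $L(h)$ to $\operatorname{range}P(h)$, and the identification of the first-order reduced operator $P(0)L_1P(0)$, whose matrix in the basis $V$ is $WL_1V$. Your factored form $R(h)-\lambda_0 I=W(h)\bigl(L(h)-\lambda_0 I\bigr)V(h)\bigl(W(h)V(h)\bigr)^{-1}$ makes the first-order computation rigorous by itself, so you can drop the vaguer remark that the normalization ``contributes only multiples of $\lambda_0 I$.''
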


\begin{proof}
This is the standard first-order perturbation formula for a semisimple matrix eigenvalue cluster;
it is the first step of Kato's reduction process
\cite[Ch.~II, Sec.~2, Th.~2.3 and (2.38)--(2.40), pp.~81--82]{Kato1995}.
\end{proof}

\emph{Proof of Proposition~\ref{prop:limit-spectrum}.}
Permute the state order from \((x,z,r)\) to
\((x,r,z)\).  The matrix \(\mathcal A_0\) becomes block triangular with diagonal blocks
\[
    M_0\triangleq\begin{bmatrix}I_n-BK_DC&BK_D\\ C&0\end{bmatrix},
    \qquad I_p .
\]
Hence
\[
    \det(\lambda I_{n+2p}-\mathcal A_0)
    =(\lambda-1)^p\det(\lambda I_{n+p}-M_0).
\]
For \(\lambda\ne0\), the Schur complement gives
\begin{align*}
    \det(\lambda I_{n+p}-M_0)
    &=
    \lambda^p
    \det\left((\lambda-1)I_n+BK_DC-\lambda^{-1}BK_DC\right)\\
    &=
    \lambda^p(\lambda-1)^n
    \det\left(I_n+\lambda^{-1}BK_DC\right).
\end{align*}
By Sylvester's determinant identity,
\[
    \det\left(I_n+\lambda^{-1}BK_DC\right)
    =
    \det\left(I_p+\lambda^{-1}CBK_D\right).
\]
Therefore
\[
    \det(\lambda I_{n+p}-M_0)
    =
    (\lambda-1)^n\det(\lambda I_p+M_D),
\]
and the identity extends to \(\lambda=0\) by polynomial continuation.  Combining this with the
factor \((\lambda-1)^p\) gives \eqref{eq:limit-spectrum}.

\emph{Semisimple reduction at the eigenvalue \(1\).}
We next identify the first-order perturbation of the eigenvalue cluster at \(1\).  The expansion
follows from
\[
    \Phi_h=I_n+hA+O(h^2),
    \qquad
    \Gamma_h=hI_n+\frac{h^2}{2}A+O(h^3),
\]
with
\[
\mathcal A_1=
\begin{bmatrix}
A-BK_PC-\frac12 ABK_DC & -BK_I & \frac12 ABK_D\\
C & 0 & 0\\
0 & 0 & 0
\end{bmatrix}.
\]
Define
\[
V=
\begin{bmatrix}
I_n&0\\
0&I_p\\
C&0
\end{bmatrix},
\qquad
W=
\begin{bmatrix}
E^{-1}&0&E^{-1}BK_D\\
0&I_p&0
\end{bmatrix}.
\]
A direct calculation gives
\[
    \mathcal A_0V=V,\qquad W\mathcal A_0=W,\qquad WV=I_{n+p}.
\]
By Proposition~\ref{prop:limit-spectrum}, nonsingularity of \(E\) implies that the algebraic
multiplicity of the eigenvalue \(1\) is \(n+p\).  Hence the displayed dual bases show that this
eigenvalue is semisimple.  Furthermore,
\[
    W\mathcal A_1V
    =
    \begin{bmatrix}
    E^{-1}(A-BK_PC)&-E^{-1}BK_I\\
    C&0
    \end{bmatrix}
    =A_{\rm id}.
\]
Thus the first-order reduced matrix for the cluster at \(1\) is exactly \(A_{\rm id}\).

\emph{Proof of Theorem~\ref{thm:spectral-splitting}.}
Proposition~\ref{prop:limit-spectrum} gives \(n+p\)
eigenvalues of the limit matrix at \(1\) and the remaining \(p\) eigenvalues at \(-\spec(M_D)\).
Since \(E\) is nonsingular, \(I_p+M_D\) is nonsingular by Sylvester's determinant identity, and hence
\(1\notin-\spec(M_D)\).  Continuity of polynomial roots separates the two clusters for all
sufficiently small \(h\).  Exactly \(p\) eigenvalues converge, with algebraic multiplicity, to
\(-\spec(M_D)\), and exactly \(n+p\) eigenvalues form the cluster near \(1\).

The entries of \(\mathcal A_h\) are analytic in \(h\) near \(h=0\), with \(h^{-1}\Gamma_h\)
understood through its removable value \(I_n\) at \(h=0\).  The calculation above shows that the
slow cluster is semisimple and that its first-order reduced perturbation is \(A_{\rm id}\).
Lemma~\ref{lem:semisimple-cluster} gives
\[
    \frac{\lambda_j(h)-1}{h}\to\mu_j,
    \qquad j=1,\dots,n+p,
\]
after labeling with algebraic multiplicities.  This proves \eqref{eq:slow-asymptotics}.  The fast
and slow groups account for all eigenvalues.

\emph{Proof of Corollary~\ref{cor:spectral-radius}.}
The condition \(\rhoSpec(M_D)<1\) implies that the fast eigenvalues stay
in a compact subset of the open unit disk for all sufficiently small \(h\).  The slow eigenvalues
satisfy
\[
    |1+h\mu+o(h)|=1+h\operatorname{Re}\mu+o(h)
\]
for the finitely many \(\mu\in\spec(A_{\rm id})\).  Taking the maximum over the slow cluster gives
the stated expansion of \(\rhoSpec(\mathcal A_h)\).

\section{Proofs of Stability Results}
\label{app:stability-proofs}

\emph{Proof of Theorem~\ref{thm:stability}.}
If \(A_{\rm id}\) is Hurwitz, then
\[
    |1+h\mu+o(h)|<1
\]
for each \(\mu\in\spec(A_{\rm id})\) and all small \(h>0\).  If \(\rhoSpec(M_D)<1\), the fast
eigenvalues also remain inside the unit disk.

If \(A_{\rm id}\) has an eigenvalue \(\mu\) with \(\operatorname{Re}\mu>0\), then
\[
    |1+h\mu+o(h)|>1
\]
for small \(h>0\).  If \(M_D\) has an eigenvalue \(\nu\) with \(|\nu|>1\), a fast eigenvalue
converges to \(-\nu\).  Either case prevents Schur stability for all small \(h\).

For the last statement of Theorem~\ref{thm:stability}, the sufficiency follows from the first
paragraph of this proof.  Conversely, assume that \(\mathcal A_h\) is Schur for every sufficiently
small \(h>0\).  Under the stated spectral exclusions, if \(A_{\rm id}\) were not Hurwitz, then it
would have an eigenvalue with positive real part.  If \(\rhoSpec(M_D)<1\) failed, then \(M_D\) would
have an eigenvalue with modulus greater than one.  Each alternative contradicts the instability
conclusion above.

\emph{Proof of Corollary~\ref{cor:precertification}.}
Since \(A_{\rm id}\) is Hurwitz, it has no eigenvalues on the imaginary axis.  The corollary is the
last statement of Theorem~\ref{thm:stability} with the ideal PID stability condition fixed.

\section{Proof of the Boundary Drift Formula}
\label{app:boundary-drift-proof}

Since \(\nu\) is a simple eigenvalue of \(M_D\), Proposition~\ref{prop:limit-spectrum} implies that
\(\lambda_0=-\nu\) is a simple eigenvalue of \(\mathcal A_0\).  Therefore, by simple-eigenvalue
perturbation, there is a unique eigenvalue branch \(\lambda_f(h)\) of \(\mathcal A_h\) near
\(\lambda_0\) for all sufficiently small \(h>0\).
The expansion \(\mathcal A_h=\mathcal A_0+h\mathcal A_1+O(h^2)\) follows from
\(\Phi_h=I_n+hA+O(h^2)\) and \(\Gamma_h=hI_n+\frac{h^2}{2}A+O(h^3)\), with
\[
\mathcal A_1=
\begin{bmatrix}
A-BK_PC-\frac12 ABK_DC & -BK_I & \frac12 ABK_D\\
C & 0 & 0\\
0 & 0 & 0
\end{bmatrix}.
\]
A right eigenvector of \(\mathcal A_0\) for \(\lambda_0=-\nu\) is
\[
    v=\begin{bmatrix}-BK_Dq\\0\\q\end{bmatrix},
\]
and a left eigenvector is
\[
    w^\ast=\begin{bmatrix}\ell^\ast C&0&-\ell^\ast\end{bmatrix}.
\]
Because \(E\) is nonsingular, \(\nu\ne -1\), and
\[
    w^\ast v=-(\nu+1)\ell^\ast q\ne0 .
\]
The standard first-order perturbation formula for a simple eigenvalue
\cite{StewartSun1990,Kato1995} gives
\[
    \alpha=\frac{w^\ast \mathcal A_1v}{w^\ast v}.
\]
Moreover,
\[
    w^\ast \mathcal A_1v
    =
    \nu\,\ell^\ast CBK_Pq+\frac{\nu-1}{2}\,\ell^\ast CABK_Dq,
\]
which yields \eqref{eq:boundary-drift}.  If \(|\nu|=1\), the statement about the unit disk follows
from
\[
    |\lambda_0+h\alpha|^2
    =
    1+2h\,\operatorname{Re}(\overline{\lambda_0}\alpha)+O(h^2).
\]

\end{document}